\newtheorem{theorem}{Theorem}[section]
\newtheorem{remark}[theorem]{Remark}
\newtheorem{lemma}[theorem]{Lemma}
\newtheorem{definition}[theorem]{Definition}
\newcommand{\thmref}[1]{Theorem~\ref{#1}}
\begin{document}

\title
{On the coefficients of symmetric power $L$-functions}
\author{Jaban Meher, Karam Deo Shankhadhar and G. K. Viswanadham}

\address[Jaban Meher]{School of Mathematical Sciences, National Institute of Science Education and Research, HBNI, Bhubaneswar, P.O. Jatni, Khurda 752 050, Odisha, India.}
\email{jaban@niser.ac.in}

\address[Karam Deo Shankhadhar]{Department of Mathematics, Indian Institute of Science Education and Research Bhopal, Bhopal Bypass Road, Bhauri, Bhopal 462 066, Madhya Pradesh, India.}
\email{karamdeo@iiserb.ac.in}

\address[G. K. Viswanadham]{Department of Mathematics, Indian Institute of Technology Bombay, Mumbai 400 076, Maharashtra, India.}
\email{vissu35@gmail.com}

\subjclass[2010]{Primary: 11F11, 11F30; Secondary: 11M41.}

\keywords{Cusp forms, Fourier coefficients, symmetric power $L$-functions}

\begin{abstract}
We study the signs of the Fourier coefficients of a newform. Let $f$ be a normalized newform of weight $k$ for $\Gamma_0(N)$. Let $a_f(n)$ be the $n$th Fourier coefficient of $f$. For any fixed positive integer $m$, we study the distribution of the signs of $\{a_f(p^m)\}_p$, where $p$ runs over all prime numbers. We also find out the abscissas of absolute convergence of two Dirichlet series with coefficients involving the Fourier coefficients of cusp forms and the coefficients of symmetric power 
$L$-functions. 
\end{abstract}

\maketitle

\section{Introduction}

Let $S_k(N)$ denote the space of cusp forms of even integral weight $k$ for the group 
$\Gamma_0(N)$. 
Let $f\in S_k(N)$ be a normalized newform with Fourier expansion  
\begin{equation}\label{fourier}
f(z)=\sum_{n=1}^{\infty}a_f(n)e^{2\pi i nz}\; =\; \sum_{n=1}^{\infty}n^{\frac{k-1}{2}}\lambda_f(n)e^{2\pi inz},
\end{equation}
where $z$ is in the complex upper-half plane $\mathcal{H}$.
It is well known that the Fourier coefficients $\lambda_f(n)$ are real numbers. 
The signs of the Fourier coefficients $\lambda_f(n)$ have been studied by several authors due to their various number theoretic applications. A standard result of the classical theorem of Landau on Dirichlet series with non-negative coefficients implies that the sequence $\{\lambda_f(n)\}_{n\geq 1}$ changes sign infinitely often. 
It is natural to consider the signs of $\lambda_f(n)$ where $n$ varies over a sparse subset $\mathcal{S}$ of the set of natural numbers. The case when $\mathcal{S}$ is the set of prime numbers was studied by M. Ram Murty \cite{Ram} in detail. When $\mathcal{S}$ is the set of squares, cubes or fourth powers of the natural numbers, the sign changes were studied by the authors in \cite{JKV}. 
Using \cite[Corollary 3]{MR} about sign change of multiplicative functions established by 
K. Matom{\"a}ki and M. Radziwi{\l}{\l} one deduces that for any fixed positive integer $m$, the sequence 
$\{\lambda_f(n^m)\}_{n\geq 1}$ has infinitely many sign changes, and the number of sign changes up to $x$ is $\gg x$. 
Let $m$ be a fixed positive integer. In this article first we discuss the distribution of the signs of 
$\{\lambda_f(p^m)\}$, where $p$ runs over all prime numbers. One has 
$\lambda_f(p^m)=\lambda_{{\rm sym}^m f}(p)$, where $\lambda_{{\rm sym}^m f}(p)$ denotes the 
$p$-th coefficient of the  $m$-th symmetric power $L$-function $L(s, sym^m f)$ attached to $f$. 

Let us denote the set of all primes by $\mathbb{P}$. For a subset $A \subseteq \mathbb{P}$, the density of $A$
in $\mathbb{P}$, denoted by $d(A)$, is defined as
\begin{equation}
d(A)\;=\; \lim_{x\to \infty}\frac{\#\{p\leq x\mid p\in A\}}{\#\{p\leq x \mid p \in \mathbb{P}\}},
\end{equation} 
provided the limit exists.
For a fixed integer $m\geq 1$, let 
$$
P_m :=\{p \in \mathbb{P} \mid p \nmid N, \lambda_f(p^m)>0\},
$$
and 
$$
P^{'}_{m} :=\{p \in \mathbb{P} \mid p \nmid N, \lambda_f(p^m)<0\}.
$$
In our first two theorems which are stated below, we calculate the densities of the two sets $P_m$ and $P^{'}_{m}$. From the two theorems, we see that the densities depend on whether a form is with complex multiplication or without complex multiplication as well as the parity of $m$.
\begin{theorem}\label{thm:noncm}
Let $m\geq 1$ be an integer.
Let $f\in S_k(N)$ be a normalized newform without complex multiplication. 
Then the sequence $\{\lambda_f(p^m)\}_{p \in \mathbb{P}}$ changes signs infinitely often.
Moreover, we have 
\begin{equation}
    d(P_m)=
    \begin{cases*}
      \frac{1}{2} & \rm{if} $m$ is odd, \\
      \frac{m+2}{2(m+1)} -\frac{1}{2\pi}\tan{(\frac{\pi}{m+1})} & \rm{if} $m$ is even,
    \end{cases*}
  \end{equation}
  and
  \begin{equation}
    d(P^{'}_{m})=
    \begin{cases*}
      \frac{1}{2} & \rm if $m$ is odd,\\
      \frac{m}{2(m+1)} +\frac{1}{2\pi}\tan{(\frac{\pi}{m+1})} & \rm if $m$ is even.
    \end{cases*}
  \end{equation}
\end{theorem}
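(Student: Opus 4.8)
The plan is to rewrite the sign of $\lambda_f(p^m)$ in terms of the Satake angle of $f$ at $p$ and then apply Sato--Tate equidistribution, which is available here because $f$ is a non-CM newform (Barnet-Lamb, Geraghty, Harris, Taylor). For $p\nmid N$, Deligne's bound lets us write $\lambda_f(p)=2\cos\theta_p$ with $\theta_p\in[0,\pi]$, and the Hecke relations give
\[
\lambda_f(p^m)=\lambda_{\mathrm{sym}^m f}(p)=\frac{\sin\bigl((m+1)\theta_p\bigr)}{\sin\theta_p}=U_m(\cos\theta_p),
\]
the $m$-th Chebyshev polynomial of the second kind. Since $\sin\theta_p>0$ on $(0,\pi)$, the sign of $\lambda_f(p^m)$ is that of $\sin\bigl((m+1)\theta_p\bigr)$. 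Writing $d\mu=\tfrac{2}{\pi}\sin^2\theta\,d\theta$ for the Sato--Tate measure on $[0,\pi]$, the first step is therefore
\begin{align*}
d(P_m)&=\mu\bigl(\{\theta\in[0,\pi]:\sin((m+1)\theta)>0\}\bigr),\\
d(P_m')&=\mu\bigl(\{\theta\in[0,\pi]:\sin((m+1)\theta)<0\}\bigr),
\end{align*}
which one obtains by sandwiching these (discontinuous) indicators between continuous functions and invoking equidistribution; this is legitimate because the boundary locus $\{\theta:\sin((m+1)\theta)=0\}$ is finite, hence $\mu$-null. The same observation shows the set of $p\nmid N$ with $\lambda_f(p^m)=0$ has density $0$, so $d(P_m)+d(P_m')=1$, and it suffices to compute $d(P_m)$.

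Next I would evaluate this $\mu$-measure. The set $\{\theta\in[0,\pi]:\sin((m+1)\theta)>0\}$ is the disjoint union of those intervals $\bigl(\tfrac{2j\pi}{m+1},\tfrac{(2j+1)\pi}{m+1}\bigr)$ that lie in $[0,\pi]$. If $m$ is odd, then $m+1$ is even and the substitution $\theta\mapsto\pi-\theta$ sends $\sin((m+1)\theta)$ to $-\sin((m+1)\theta)$ while preserving $\mu$; hence the positivity and negativity loci have equal measure and $d(P_m)=d(P_m')=\tfrac12$. If $m$ is even, then $m+1$ is odd; using $\sin^2\theta=\tfrac12(1-\cos2\theta)$, the ``$1$'' contributes $\tfrac1\pi$ times the total length of the intervals, namely $\tfrac{m+2}{2(m+1)}$, while the ``$-\cos2\theta$'' contributes $-\tfrac{1}{2\pi}$ times the finite sum
\[
\sum_{j}\Bigl(\sin\tfrac{2(2j+1)\pi}{m+1}-\sin\tfrac{4j\pi}{m+1}\Bigr)
=\operatorname{Im}\Bigl(-\sum_{r=1}^{m}\bigl(-e^{2\pi i/(m+1)}\bigr)^{r}\Bigr),
\]
which, on summing the finite geometric series, equals $\tan\!\bigl(\tfrac{\pi}{m+1}\bigr)$. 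This gives $d(P_m)=\tfrac{m+2}{2(m+1)}-\tfrac{1}{2\pi}\tan\!\bigl(\tfrac{\pi}{m+1}\bigr)$, and then $d(P_m')=1-d(P_m)$ is the claimed formula. (Alternatively one can expand $\operatorname{sgn}(\sin((m+1)\theta))$ in its Fourier series and integrate term by term, using $\sum_{j\ \mathrm{odd}}(j^2-a^2)^{-1}=\tfrac{\pi}{4a}\tan\tfrac{\pi a}{2}$ with $a=\tfrac{2}{m+1}$.)

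For the sign-change assertion: when $m$ is odd both densities equal $\tfrac12$; when $m$ is even one has $\tan\!\bigl(\tfrac{\pi}{m+1}\bigr)\le\tan\tfrac{\pi}{3}=\sqrt3<\pi\le\tfrac{\pi(m+2)}{m+1}$, so $d(P_m)>0$, while $d(P_m')>0$ is clear. Hence both $P_m$ and $P_m'$ are infinite, which forces $\{\lambda_f(p^m)\}_{p\in\mathbb P}$ to change sign infinitely often.

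The only genuine work is the trigonometric evaluation in the even case — reducing the alternating sum to the closed form $\tan(\pi/(m+1))$ — together with the attendant bookkeeping: which of the intervals $\bigl(\tfrac{2j\pi}{m+1},\tfrac{(2j+1)\pi}{m+1}\bigr)$ meet $[0,\pi]$ depends on the parity of $m+1$, and the signs in the geometric sum must be tracked carefully. Nothing deeper than Sato--Tate enters; the one point that should be spelled out, rather than being an obstacle, is the passage from equidistribution of $\{\theta_p\}$ to densities of sets cut out by the sign of $\sin((m+1)\theta)$, which is fine since the indicators are continuous off a finite set.
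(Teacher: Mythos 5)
Your proposal is correct and follows essentially the same route as the paper: reduce the sign of $\lambda_f(p^m)$ to that of $\sin((m+1)\theta_p)$ via the Chebyshev formula, apply Sato--Tate equidistribution to the finite union of intervals where that sine is positive, and evaluate the resulting $\mu_{ST}$-measure. The only differences are cosmetic --- you handle the odd case by the measure-preserving reflection $\theta\mapsto\pi-\theta$ where the paper pairs the intervals $I_j$ and $I_j'$ (the same identity in disguise), and you sum the alternating sine sum as a geometric series where the paper uses the closed form for $\sin x+\cdots+\sin nx$, both correctly yielding $\tan\left(\frac{\pi}{m+1}\right)$.
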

\begin{theorem}\label{thm:cm}
Let $m\geq 1$ be an integer.
Let $f\in S_k(N)$ be a normalized newform with complex multiplication. Then 
the sequence $\{\lambda_f(p^m)\}_{p \in \mathbb{P}}$ changes signs infinitely often.
Moreover, we have  
\begin{equation}
    d(P_m)=
    \begin{cases*}
      \frac{1}{4} & \rm if $m$ is odd, \\
      \frac{m+2}{4(m+1)}+\frac{1}{2}    & \rm if $m\equiv0\pmod 4$, \\
      \frac{m+2}{4(m+1)} & \rm if $m\equiv 2\pmod 4$,
    \end{cases*}
  \end{equation}
  and 
  \begin{equation}
    d(P^{'}_m)=
    \begin{cases*}
      \frac{1}{4} & \rm if $m$ is odd, \\
      \frac{m}{4(m+1)} & \rm if $m\equiv 0\pmod 4$,\\
      \frac{m}{4(m+1)} +\frac{1}{2}   & \rm if $m\equiv 2\pmod 4$.
    \end{cases*}
  \end{equation}
\end{theorem}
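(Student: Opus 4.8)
The plan is to reduce the computation of $d(P_m)$ and $d(P'_m)$ to an equidistribution statement for the angles $\theta_p\in[0,\pi]$ defined by $\lambda_f(p)=2\cos\theta_p$, mirroring the treatment of the non-CM case in \thmref{thm:noncm} but with the Sato--Tate measure replaced by the distribution appropriate to a CM form. Realise $f$ as the theta series attached to a Hecke Gr\"ossencharacter $\psi$ of an imaginary quadratic field $K$. Every prime $p\nmid N$ is unramified in $K$ and falls into one of two classes: the primes \emph{inert} in $K$, which form a set of density $1/2$ (by Chebotarev, equivalently by the quadratic reciprocity law), and for which $\lambda_f(p)=0$; and the primes \emph{split} in $K$, the complementary set of density $1/2$, for which Hecke's equidistribution theorem asserts that $\{\theta_p\}_{p\ \mathrm{split}}$ is equidistributed in $[0,\pi]$ with respect to the \emph{uniform} measure $\frac{1}{\pi}\,d\theta$.

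Next I would record the closed form coming from the Hecke relations: for every $p\nmid N$,
\begin{equation*}
\lambda_f(p^m)=\lambda_{\mathrm{sym}^m f}(p)=\frac{\sin\bigl((m+1)\theta_p\bigr)}{\sin\theta_p},
\end{equation*}
so on $(0,\pi)$ the sign of $\lambda_f(p^m)$ is that of $\sin\bigl((m+1)\theta_p\bigr)$. For the inert primes, where $\theta_p=\pi/2$, the recursion $\lambda_f(p^{j+1})=-\lambda_f(p^{j-1})$ (or the displayed formula) gives $\lambda_f(p^m)=0$ for odd $m$ and $\lambda_f(p^m)=(-1)^{m/2}$ for even $m$; this atom at $\theta=\pi/2$ is exactly what produces the dependence on $m\bmod 4$. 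For the split primes the sign question becomes the elementary problem of computing
\begin{equation*}
\frac{1}{\pi}\,\Bigl|\Bigl\{\theta\in[0,\pi]:\ \sin\bigl((m+1)\theta\bigr)>0\Bigr\}\Bigr|,
\end{equation*}
the finite zero set $\{\theta=j\pi/(m+1)\}$ contributing density $0$. When $m$ is odd, $m+1$ is even and the reflection $\theta\mapsto\pi-\theta$ forces this proportion to be $1/2$; when $m$ is even it equals $\tfrac{m+2}{2(m+1)}$ — a union of $\tfrac{m+2}{2}$ intervals of length $\pi/(m+1)$ on which $\sin\bigl((m+1)\theta\bigr)>0$ — with the negative part having proportion $\tfrac{m}{2(m+1)}$.

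Finally I would assemble the densities by adding the split and inert contributions, each weighted by $1/2$. The split primes contribute $\tfrac12\cdot\tfrac12$ (for odd $m$) or $\tfrac12\cdot\tfrac{m+2}{2(m+1)}$ (for even $m$) to $d(P_m)$, while the inert primes contribute $0$ to $d(P_m)$ except when $m\equiv0\pmod 4$, where they add $\tfrac12$; the symmetric bookkeeping for $d(P'_m)$ (inert primes adding $\tfrac12$ precisely when $m\equiv2\pmod 4$) yields the stated values. As a consistency check one gets $d(P_m)+d(P'_m)=1$ for even $m$, whereas for odd $m$ the missing mass $1/2$ is exactly the density of primes with $\lambda_f(p^m)=0$, namely the inert primes. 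Since all of $d(P_m)$ and $d(P'_m)$ are strictly positive, both signs occur for infinitely many $p$, giving infinitely many sign changes.

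The only genuinely non-routine input is Hecke's equidistribution theorem for the split primes — it plays here the role that the (now proven) Sato--Tate conjecture plays in \thmref{thm:noncm} — together with the bookkeeping needed to handle the mass-$1/2$ atom that the inert primes place at $\theta=\pi/2$; the interval-measuring computation itself is elementary and essentially the same as in the non-CM case.
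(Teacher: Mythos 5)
Your argument is correct and follows essentially the same route as the paper: the paper packages your split/inert dichotomy and Hecke equidistribution for the Gr\"ossencharacter into a single cited input (Theorem~\ref{thm:deuring}, the Deuring equidistribution law from \cite{AIW}), which is exactly the measure $\tfrac12\delta_{\pi/2}+\tfrac12\cdot\tfrac{1}{\pi}\,d\theta$ that you assemble by hand, and the subsequent interval bookkeeping together with the $m\bmod 4$ case analysis at the atom $\theta=\pi/2$ is identical. The only difference is cosmetic: you derive the equidistribution statement from the CM structure rather than quoting it.
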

Our next result is on the abscissa of absolute convergences of two Dirichlet series attached to $f$.
Assume that $f$ is a normalized Hecke eigenform of even integral weight $k$ for the full modular group  $SL_2(\mathbb{Z})$ with Fourier expansion given in \eqref{fourier}. For a fixed integer $m\ge 1$, let
\begin{equation}\label{l_1(s)}
L_m(s,f) := \sum_{n \geq 1}\lambda_f(n^m) n^{-s}\; 
 \end{equation}
and
\begin{equation}\label{l_2(s)}
L(s, sym^m f) := \sum_{n \geq 1} \lambda_{{\rm sym}^m f}(n) n^{-s}.
\end{equation}
We prove the following theorem.
\begin{theorem}\label{thm:abscissa_absolute}
For any positive integer $m\ge 1$, each of the Dirichlet series $L_m(s, f)$ and $L(s, sym^m f)$ defined in \eqref{l_1(s)} and \eqref{l_2(s)}, has abscissa of absolute convergence exactly equal to $1$. 
\end{theorem}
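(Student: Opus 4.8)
The plan is to show, for each of the two Dirichlet series, that its abscissa of absolute convergence $\sigma_a$ satisfies both $\sigma_a\le 1$ and $\sigma_a\ge 1$. The upper bound is a soft consequence of Deligne's bound, whereas the lower bound will need the analytic theory of symmetric power $L$-functions.

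For the upper bound I would argue as follows. Since $f$ has level one, the Satake parameters of $f$ at every prime lie on the unit circle, so $|\lambda_f(n)|\le d(n)$ and therefore $|\lambda_f(n^m)|\le d(n^m)\le d(n)^m\ll_\varepsilon n^\varepsilon$. In the same way $\lambda_{{\rm sym}^m f}$ is multiplicative and, at a prime power $p^j$, is bounded in absolute value by $\binom{j+m}{m}$, the $p^{-js}$-coefficient of $(1-p^{-s})^{-(m+1)}$; hence $|\lambda_{{\rm sym}^m f}(n)|\le d_{m+1}(n)\ll_\varepsilon n^\varepsilon$, where $d_{m+1}$ is the $(m+1)$-fold divisor function. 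It follows that both $L_m(s,f)$ and $L(s,{\rm sym}^m f)$ converge absolutely for $\R(s)>1$, so $\sigma_a\le 1$ in each case.

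For the lower bound it is enough, in each case, to show that the series of absolute values of the coefficients diverges at $s=1$; restricting the sum to prime indices $n=p$ and recalling $\lambda_f(p^m)=\lambda_{{\rm sym}^m f}(p)$, both statements reduce to the single claim $\sum_p |\lambda_{{\rm sym}^m f}(p)|\,p^{-1}=\infty$. Writing $\lambda_f(p)=2\cos\theta_p$ with $\theta_p\in[0,\pi]$ one has $\lambda_{{\rm sym}^m f}(p)=U_m(\cos\theta_p)$, where $U_m$ is the $m$-th Chebyshev polynomial of the second kind, and $|U_m(\cos\theta_p)|\le m+1$. I would then use the polynomial identity $U_m(x)^2=\sum_{j=0}^m U_{2j}(x)$ (equivalently, the Clebsch--Gordan decomposition for $SL_2$), which gives, for real $s>1$,
\begin{equation*}
\sum_p U_m(\cos\theta_p)^2\,p^{-s}\;=\;\sum_p p^{-s}\;+\;\sum_{j=1}^m\sum_p \lambda_{{\rm sym}^{2j}f}(p)\,p^{-s}.
\end{equation*}
As $s\to 1^+$ the first term is $\log\frac{1}{s-1}+O(1)\to\infty$, while for each $j\ge 1$ an expansion of the Euler product of $L(s,{\rm sym}^{2j}f)$ gives $\sum_p\lambda_{{\rm sym}^{2j}f}(p)\,p^{-s}=\log L(s,{\rm sym}^{2j}f)+O_j(1)$ for $\R(s)>\tfrac12$; since $L(s,{\rm sym}^{2j}f)$ is holomorphic and non-zero at $s=1$ — this is where I invoke the automorphy of symmetric powers of level-one newforms (Newton--Thorne) together with the standard non-vanishing of $GL_n$ $L$-functions on $\R(s)=1$ — this sum stays bounded as $s\to1^+$. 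Hence $\sum_p U_m(\cos\theta_p)^2\,p^{-s}\to\infty$, and from $U_m(\cos\theta_p)^2\le (m+1)\,|U_m(\cos\theta_p)|$ we obtain $\sum_p|U_m(\cos\theta_p)|\,p^{-s}\to\infty$ as $s\to1^+$; letting $s\downarrow 1$ by monotone convergence yields $\sum_p|U_m(\cos\theta_p)|\,p^{-1}=\infty$. Thus $\sigma_a\ge 1$ for both series, and combined with the upper bound, $\sigma_a=1$, as claimed.

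The main obstacle is precisely the control of the auxiliary sums $\sum_p\lambda_{{\rm sym}^{2j}f}(p)\,p^{-s}$ for $1\le j\le m$, i.e.\ the input that each $L(s,{\rm sym}^{2j}f)$ extends to an entire function and does not vanish on the line $\R(s)=1$. An alternative that stays closer to the tools behind Theorems~\ref{thm:noncm} and \ref{thm:cm} is to bypass these $L$-functions via the Sato--Tate equidistribution of $\{\theta_p\}$: the integral $\int_0^\pi|U_m(\cos\theta)|\,\tfrac{2}{\pi}\sin^2\theta\,d\theta$ is a positive constant $c_m$, so $\sum_{p\le x}|U_m(\cos\theta_p)|\sim c_m\,\pi(x)$ and partial summation gives $\sum_{p\le x}|U_m(\cos\theta_p)|/p\sim c_m\log\log x\to\infty$, which is exactly the divergence required.
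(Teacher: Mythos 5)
Your proof is correct, and for the lower bound it takes a genuinely different route from the paper. The paper's argument is essentially a two-line deduction from the Tang--Wu asymptotic $\sum_{n\le x}|\lambda_f(n^m)|\sim D_m(f)\,x(\log x)^{-\delta_m}$ (Theorem~\ref{thm:TW}): after a partial-summation lemma identifying $\sigma_a$ with $\inf\{\alpha: \sum_{n\le N}|a(n)|=O(N^\alpha)\}$, the bound $\sum_{n\le N}|\lambda_f(n^m)|\gg N(\log N)^{-\delta_m}$ rules out $\sigma_a<1$, and Deligne's bound gives $\sigma_a\le 1$. You instead restrict to prime indices and prove $\sum_p|\lambda_{{\rm sym}^m f}(p)|p^{-1}=\infty$ directly, either via the Clebsch--Gordan identity $U_m^2=\sum_{j=0}^m U_{2j}$ together with holomorphy and non-vanishing of $L(s,{\rm sym}^{2j}f)$ at $s=1$ (a correct but heavy input --- Newton--Thorne plus standard non-vanishing on $\R(s)=1$, considerably deeper than what the paper invokes), or via Sato--Tate equidistribution, which gives $\sum_{p\le x}|U_m(\cos\theta_p)|\sim c_m\pi(x)$ with $c_m=\frac{2}{\pi}\int_0^\pi|U_m(\cos\theta)|\sin^2\theta\,d\theta>0$ and hence divergence of $\sum_p |U_m(\cos\theta_p)|/p$ by partial summation. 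Your Sato--Tate variant is the more attractive of the two: it uses only Theorem~\ref{thm:satotate}, which the paper already quotes, it dispenses with the Tang--Wu input entirely (whereas the paper's Remark 1.4 flags that dependence as the obstruction to higher level), and it extends verbatim to non-CM newforms of arbitrary level. What the paper's route buys in exchange is brevity and, implicitly, the finer quantitative information $\sum_{n\le N}|\lambda_f(n^m)|\asymp N(\log N)^{-\delta_m}$, which your prime-restricted argument does not recover. All the individual steps you use --- $|\lambda_{{\rm sym}^m f}(n)|\le d_{m+1}(n)$, the identity $U_m(\cos\theta)^2=\sum_{j=0}^m\sin((2j+1)\theta)/\sin\theta$, the bound $U_m^2\le(m+1)|U_m|$, and the monotone-convergence passage to $s=1$ --- check out.
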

\begin{remark}
The abscissa of absolute convergence of the series $L(s, sym^m f)$ has been found out in \cite{KMP}. But the method of proof of the same result is different in this paper. However, using the method of \cite{KMP}, one will not be able to find the abscissa of absolute convergence of the series $L_m(s, f)$ for all integers $m\ge 1$.
\end{remark}
\begin{remark}
We feel that it may be possible to prove \thmref{thm:abscissa_absolute} for higher level case. But in the proof of \thmref{thm:abscissa_absolute}, we use the result of \cite{TW} which is proved only in level $1$ case.
\end{remark}
 
This article is organised as follows. In the next section we provide the formulas for certain Fourier coefficients. We recall certain results regarding the distribution of the Fourier coefficients and  
write down certain trigonometric formulas for the prime power Fourier coefficients by using the Hecke relation among the coefficients. Also, we recall certain theorems including Sato-Tate conjecture which will be used in establishing our results. In Section 3, we prove Theorem \ref{thm:noncm} by using the distribution results for forms without complex multiplication. In Section 4, we prove Theorem \ref{thm:cm} by using the distribution results for forms with complex multiplication. 
In Section 5, we prove Theorem \ref{thm:abscissa_absolute} by using certain results established in \cite{TW}.

\section{Preliminary results}

Let 
$$f=\sum_{n=1}^{\infty}n^{\frac{k-1}{2}}\lambda_f(n)e^{2\pi inz}\in S_k(N)$$ 
be a normalized newform. 
The Ramanujan-Petersson conjecture which was proved by Deligne, states that
$$|\lambda_f(p)|\le 2,$$ 
where $p$ is any prime number not dividing $N$.
Thus for any prime  $p, p \nmid N$, we have
$$
\lambda_f(p)= 2\cos{\theta_p}
$$
for some $\theta_p\in [0,\pi]$. The next result which is well-known, gives a trigonometric formula for 
$\lambda_f(p^m)$. For the sake of completeness we provide a proof of the result.

\begin{lemma}\label{lemma:lambdaformula}
Let $p$ be any prime number not dividing $N$ and $m\geq 1$ be any positive integer. We have
\begin{equation}\label{eq:lambdapm}
\lambda_f(p^m)\;=\; \frac{\sin{((m+1)\theta_p)}}{\sin{\theta_p}}\;,
\end{equation}
with the interpretation that the values of $\lambda_f(p^m)$ are $m+1$ and $(-1)^m (m+1)$ when the values of $\theta_p$ are $0$ and $\pi$ respectively.
\end{lemma}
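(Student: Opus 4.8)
The plan is to use the Hecke recursion for the coefficients $\lambda_f(p^m)$ at a prime $p \nmid N$, namely $\lambda_f(p^{m+1}) = \lambda_f(p)\lambda_f(p^m) - \lambda_f(p^{m-1})$ for $m \geq 1$, together with the normalizations $\lambda_f(p^0) = 1$ and $\lambda_f(p) = 2\cos\theta_p$. Writing $\lambda_f(p) = 2\cos\theta_p$ with $\theta_p \in (0,\pi)$ (treating the endpoints $\theta_p = 0, \pi$ separately as a limiting case), I would verify that the right-hand side of \eqref{eq:lambdapm}, i.e. the function $U_m := \sin((m+1)\theta_p)/\sin\theta_p$ (a Chebyshev polynomial of the second kind evaluated at $\cos\theta_p$), satisfies the same recursion and the same initial conditions, and conclude equality by induction on $m$.

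Concretely, first I would check the base cases: for $m = 0$, $U_0 = \sin\theta_p/\sin\theta_p = 1 = \lambda_f(p^0)$, and for $m = 1$, $U_1 = \sin(2\theta_p)/\sin\theta_p = 2\cos\theta_p = \lambda_f(p)$. Next, assuming the formula holds for exponents up to $m$, I would compute $\lambda_f(p)\,U_m - U_{m-1} = \frac{2\cos\theta_p \sin((m+1)\theta_p) - \sin(m\theta_p)}{\sin\theta_p}$ and simplify the numerator using the product-to-sum identity $2\cos\theta_p\sin((m+1)\theta_p) = \sin((m+2)\theta_p) + \sin(m\theta_p)$, which collapses it to $\sin((m+2)\theta_p)$; hence $\lambda_f(p)\,U_m - U_{m-1} = U_{m+1}$, matching the Hecke recursion and completing the induction. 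Finally, for the degenerate values $\theta_p = 0$ and $\theta_p = \pi$ (which occur only if $|\lambda_f(p)| = 2$), I would take the limit $\theta_p \to 0$ or $\theta_p \to \pi$ in $\sin((m+1)\theta_p)/\sin\theta_p$, or equivalently note that $U_m(\cos\theta_p) \to m+1$ as $\theta_p \to 0$ and $U_m(\cos\theta_p) \to (-1)^m(m+1)$ as $\theta_p \to \pi$, which gives the stated interpretation.

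There is no serious obstacle here; the only point requiring a little care is recalling and justifying the Hecke recursion for prime-power coefficients at primes not dividing the level (which is standard for a normalized newform, following from multiplicativity and the local Euler factor $(1 - \lambda_f(p)p^{-s} + p^{-2s})^{-1}$), and being clean about the endpoint cases where $\sin\theta_p = 0$. Alternatively, and perhaps more transparently, I could factor the local Euler factor as $1 - \lambda_f(p)X + X^2 = (1 - \alpha_p X)(1 - \beta_p X)$ with $\alpha_p\beta_p = 1$ and $\alpha_p = e^{i\theta_p}$, so that $\lambda_f(p^m) = \sum_{j=0}^{m}\alpha_p^{j}\beta_p^{m-j} = \sum_{j=0}^m e^{i(2j-m)\theta_p}$, and summing this finite geometric series directly yields $\frac{e^{i(m+1)\theta_p} - e^{-i(m+1)\theta_p}}{e^{i\theta_p} - e^{-i\theta_p}} = \frac{\sin((m+1)\theta_p)}{\sin\theta_p}$; I would likely present this second argument as the main proof since it also makes the endpoint behaviour $\alpha_p = \beta_p = \pm 1 \Rightarrow \lambda_f(p^m) = (\pm 1)^m(m+1)$ immediate.
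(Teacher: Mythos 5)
Your proposal is correct, and in fact contains two valid arguments. The first (induction on $m$ via the Hecke recursion $\lambda_f(p^{m+1})=\lambda_f(p)\lambda_f(p^m)-\lambda_f(p^{m-1})$, with base cases $U_0=1$, $U_1=2\cos\theta_p$) is essentially the paper's own proof; the only difference is cosmetic, in that you collapse the numerator in one step with the product-to-sum identity $2\cos\theta_p\sin((m+1)\theta_p)=\sin((m+2)\theta_p)+\sin(m\theta_p)$, whereas the paper reaches the same conclusion by combining the angle-addition formula for $\sin((m+1)\theta_p)$ with $\cos 2\theta_p=2\cos^2\theta_p-1$; your route is slightly cleaner but mathematically identical. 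The second argument you propose as the main proof --- factoring the local Euler factor as $(1-\alpha_pX)(1-\beta_pX)$ with $\alpha_p=e^{i\theta_p}$, $\beta_p=e^{-i\theta_p}$, writing $\lambda_f(p^m)=\sum_{j=0}^{m}\alpha_p^{j}\beta_p^{m-j}$, and summing the geometric series --- is a genuinely different derivation that the paper does not give. It buys you a proof with no induction and makes the degenerate cases $\theta_p=0,\pi$ (i.e.\ $\alpha_p=\beta_p=\pm1$, whence $\lambda_f(p^m)=(\pm1)^m(m+1)$) immediate rather than a separate limiting interpretation; the cost is that you must invoke the explicit form of the local Euler factor and the resulting expression for $\lambda_f(p^m)$ in terms of Satake parameters, which is itself usually justified by the same Hecke relations, so the two proofs rest on the same underlying input. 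Either version is acceptable.
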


\begin{proof}
To establish the above lemma we use induction on $m$. The lemma is true for $m=1$ since
$\sin{2\theta_p}=2 \sin{\theta_p}\cos{\theta_p}$.
Assume that the lemma is true for all positive integers $\le m$. 
Using the Hecke relation among the Fourier coefficients, we have
$$
\lambda_f(p^{m+1})=\lambda_f(p^m)\lambda_f(p)-\lambda_f(p^{m-1}).
$$
Applying the induction hypothesis, we have
$$
\lambda_f(p^{m+1})=\frac{\sin{((m+1)\theta_p)}}{\sin{\theta_p}}
2\cos{\theta_p}-\frac{\sin{m\theta_p}}{\sin{\theta_p}}.
$$
Using the trigonometric formulas 
$$\sin{((m+1)\theta_p)}=\sin{m\theta_p}\cos{\theta_p}+\cos{m\theta_p}\sin{\theta_p},$$
and
$$
\cos{2\theta_p}=2\cos^2{\theta_p}-1
$$ 
in the above expression, we obtain
$$
\lambda_f(p^{m+1})=\frac{\sin{m\theta_p}\cos{2\theta_p}+
\cos{m\theta_p}\sin{2\theta_p}}{\sin{\theta_p}}=\frac{\sin{((m+2)\theta_p)}}{\sin{\theta_p}}.
$$
This proves the lemma by induction.
\end{proof}

\begin{definition}\label{defn:satotatemeasure}{\rm (Sato-Tate measure)}
The Sato-Tate measure $\mu_{ST}$ is the probability measure on $[0,\pi]$ given by 
$\frac{2}{\pi}\sin^2\theta d\theta$.
\end{definition}
At this point we state certain crucial results which will be used to establish \thmref{thm:noncm} and \thmref{thm:cm}.
For any interval $I\subseteq [0,\pi]$, let 
$$\pi_I(x)=\# \{p \leq x \mid p\in\mathbb{P}, p\nmid N, \theta_p\in I\}.$$
 Taking $\zeta=1$ in case 3 of \cite[Theorem B]{BGHT}, we get the following equidistribution result for newforms without complex multiplication.
\begin{theorem}\label{thm:satotate}{\rm (Barnet-Lamb, Geraghty, Harris, Taylor)}
Let $f\in S_k(N)$ be a normalized newform without complex multiplication. The sequence 
$\{\theta_p\}_{p\nmid N}$ is equidistributed in $[0,\pi]$ with respect to the Sato-Tate measure $\mu_{ST}$. In particular, for any sub-interval $I\subseteq[0,\pi]$ we have
$$
\lim_{x\to 
\infty}\frac{\pi_I(x)}{\pi(x)}=\mu_{ST}(I)=\frac{2}{\pi}\int_I\sin^2{\theta} d\theta.
$$
\end{theorem}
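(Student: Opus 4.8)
The plan is to follow the classical strategy of Serre for deducing an equidistribution statement from the analytic properties of a family of $L$-functions, the deep analytic input being supplied by the potential automorphy results of \cite{BGHT}. By the Weyl equidistribution criterion together with the Stone--Weierstrass theorem, to show that $\{\theta_p\}_{p\nmid N}$ is $\mu_{ST}$-equidistributed it suffices to test against a family of functions whose linear span is dense in $C([0,\pi])$. The natural choice is the family of Chebyshev polynomials of the second kind $U_m$, since $U_m(\cos\theta)=\frac{\sin((m+1)\theta)}{\sin\theta}$ and $\{U_m(\cos\theta)\}_{m\ge 0}$ forms an orthonormal basis of $L^2([0,\pi],\mu_{ST})$. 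Thus I would reduce the theorem to proving, for each fixed $m\ge 1$,
\begin{equation*}
\frac{1}{\pi(x)}\sum_{\substack{p\le x\\ p\nmid N}}U_m(\cos\theta_p)\;\longrightarrow\;\int_0^\pi U_m(\cos\theta)\,d\mu_{ST}=0,
\end{equation*}
the case $m=0$ being trivial because $U_0\equiv 1$.

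The crucial bridge is \lemref{lemma:lambdaformula}, which gives $U_m(\cos\theta_p)=\frac{\sin((m+1)\theta_p)}{\sin\theta_p}=\lambda_f(p^m)=\lambda_{{\rm sym}^m f}(p)$. Hence the quantity to be controlled is precisely the average of the prime coefficients of the $m$-th symmetric power $L$-function, and the statement to be proved becomes
\begin{equation*}
\sum_{\substack{p\le x\\ p\nmid N}}\lambda_{{\rm sym}^m f}(p)=o(\pi(x))\qquad(m\ge 1).
\end{equation*}
I would establish this by a Wiener--Ikehara type Tauberian argument, exactly as in the deduction of the prime number theorem from the non-vanishing of $\zeta(s)$ on $\R(s)=1$: one feeds into the Tauberian machine the Euler product of $L(s,{\rm sym}^m f)$, whose logarithm has leading prime term $\sum_p \lambda_{{\rm sym}^m f}(p)\,p^{-s}$, and reads off the cancellation in the prime sum from the absence of a pole together with the non-vanishing at the edge of the critical strip.

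The entire weight of the argument therefore rests on the analytic continuation and boundary behaviour of the symmetric power $L$-functions, and this is the main obstacle: one needs that, for every $m\ge 1$, the function $L(s,{\rm sym}^m f)$ admits holomorphic continuation to $\R(s)\ge 1$ and does not vanish on the line $\R(s)=1$. This is a far deeper statement than the formal Tauberian step, and it is precisely what \cite{BGHT} provide: their potential automorphy theorems identify ${\rm sym}^m f$ (at least after a suitable base change) with an automorphic object, and combining this through Brauer induction with the standard holomorphy and non-vanishing properties of automorphic $L$-functions yields the required analytic input. The hypothesis that $f$ has no complex multiplication is essential here, since it guarantees that none of the symmetric powers degenerate or acquire extra poles, so that the limiting measure is the full Sato--Tate measure $\mu_{ST}$ rather than a modified one; the complex multiplication situation produces a different distribution and is treated separately (compare \thmref{thm:cm}). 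In practice I would simply invoke \cite[Theorem B, case 3]{BGHT} with $\zeta=1$ to obtain the stated equidistribution, the discussion above being the conceptual route that such a citation encodes.
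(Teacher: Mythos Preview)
The paper does not give a proof of this theorem at all: it is stated as a preliminary result and justified by the single sentence ``Taking $\zeta=1$ in case 3 of \cite[Theorem B]{BGHT}, we get the following equidistribution result for newforms without complex multiplication.'' Your final sentence does exactly the same thing, so in that sense your proposal matches the paper.

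The preceding paragraphs of your proposal, however, supply a correct conceptual outline of \emph{why} the citation suffices --- the Weyl/Serre reduction via Chebyshev polynomials $U_m$, the identification $U_m(\cos\theta_p)=\lambda_{{\rm sym}^m f}(p)$ from \lemref{lemma:lambdaformula}, and the Tauberian deduction from holomorphy and non-vanishing of $L(s,{\rm sym}^m f)$ on $\R(s)=1$ --- none of which the paper attempts to explain. This is the standard Serre framework and is accurate as a sketch; just be aware that the paper treats the theorem purely as an imported black box, so there is no ``paper's own proof'' to compare against beyond the bare citation.
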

The above theorem implies that if $A$ is a finite set, then the density of the set 
$\{p\in \mathbb{P} ~~~|~~~ \theta_p\in A\}$ is $0$.
For any interval $I$, let $\vert I \vert$ denote the length of the interval $I$. The following theorem is the analog of Theorem \ref{thm:satotate} for the newforms with complex multiplication. For a proof we refer to \cite[Theorem 3.1.1 (a)]{AIW}.
\begin{theorem}\label{thm:deuring}{\rm (Deuring equi-distribution)}
Let $f\in S_k(N)$ be a newform with complex multiplication.
Let $I\subseteq [0,\pi]$ be an interval such that  $\frac{\pi}{2}\notin I$. Then we have 
\[
\lim_{x\to \infty}\frac{\pi_I(x)}{\pi(x)•}\;=\; \frac{\vert I\vert}{2\pi},
\]
and the set $\{p \ {\rm prime} \mid \theta_p=\frac{\pi}{2}\}$ has density $\frac{1}{2}$ in the set of primes.
\end{theorem}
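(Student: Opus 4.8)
The plan is to use the theory of complex multiplication to reduce the statement to an equidistribution result for the arguments of a Hecke Gr\"ossencharacter, and then to establish that equidistribution via Weyl's criterion together with the analytic properties of the associated Hecke $L$-functions. Since $f$ has complex multiplication, there are an imaginary quadratic field $K=\mathbb{Q}(\sqrt{-D})$ and a Hecke character $\psi$ of $K$ of infinity type $k-1$ such that, for a split prime $p=\mathfrak{p}\overline{\mathfrak{p}}$, one has $a_f(p)=\psi(\mathfrak{p})+\psi(\overline{\mathfrak{p}})$, while $a_f(p)=0$ for every inert prime $p$; the finitely many ramified primes do not affect any density. By Dirichlet's theorem (equivalently, Chebotarev for the quadratic extension $K/\mathbb{Q}$), the inert primes and the split primes each have density $\tfrac{1}{2}$ in $\mathbb{P}$. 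For an inert prime, $a_f(p)=0$ forces $\cos\theta_p=0$, i.e. $\theta_p=\tfrac{\pi}{2}$, which already accounts for density $\tfrac{1}{2}$ of the set $\{p:\theta_p=\tfrac{\pi}{2}\}$. The remaining work is to understand the angles $\theta_p$ attached to the split primes.

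For a split prime $p=\mathfrak{p}\overline{\mathfrak{p}}$ the infinity type gives $|\psi(\mathfrak{p})|=p^{(k-1)/2}$, and since $\psi(\overline{\mathfrak{p}})=\overline{\psi(\mathfrak{p})}$, writing $\psi(\mathfrak{p})=p^{(k-1)/2}e^{i\phi_p}$ yields $\lambda_f(p)=p^{-(k-1)/2}a_f(p)=2\cos\phi_p$. Hence $\theta_p$ is the representative in $[0,\pi]$ of $\pm\phi_p\pmod{2\pi}$, and the folding map $\phi_p\mapsto\theta_p$ from the circle $\mathbb{R}/2\pi\mathbb{Z}$ onto $[0,\pi]$ is two-to-one. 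I would therefore prove that the arguments $\{\phi_p\}$ over the split primes are equidistributed on the circle with respect to Haar measure, so that the folded angles $\{\theta_p\}$ are equidistributed on $[0,\pi]$ with respect to $\tfrac{d\theta}{\pi}$. By Weyl's criterion it suffices to show, for each fixed integer $n\neq 0$, that $\sum_{p\le x,\ \mathrm{split}}e^{in\phi_p}=o(\pi(x))$. Up to the ramified primes, the negligible higher-degree primes, and the normalizing powers of $p$, this is exactly the prime-ideal sum attached to the character $\psi^n$, since $\psi^n(\mathfrak{p})/p^{n(k-1)/2}=e^{in\phi_p}$. As $\psi^n$ is a nontrivial Gr\"ossencharacter (its infinity type $n(k-1)$ is nonzero because $k\ge 2$), its $L$-function $L(s,\psi^n)=\prod_{\mathfrak{q}}(1-\psi^n(\mathfrak{q})(N\mathfrak{q})^{-s})^{-1}$ extends to an entire function and is non-vanishing on the line $\Re(s)=1$ by Hecke's classical work, and a standard Wiener--Ikehara argument then delivers the required cancellation.

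Finally I would assemble the two contributions. For an interval $I\subseteq[0,\pi]$ with $\tfrac{\pi}{2}\notin I$, the inert primes contribute nothing, since their only angle is $\tfrac{\pi}{2}$; thus $\pi_I(x)$ counts only split primes with $\theta_p\in I$, and using that these have density $\tfrac{1}{2}$ and are equidistributed among themselves with density $\tfrac{|I|}{\pi}$ gives $\pi_I(x)/\pi(x)\to\tfrac{1}{2}\cdot\tfrac{|I|}{\pi}=\tfrac{|I|}{2\pi}$. For the single point $\theta_p=\tfrac{\pi}{2}$, the split primes contribute density $0$ (a point has measure zero for $\tfrac{d\theta}{\pi}$), while the inert primes contribute density $\tfrac{1}{2}$, giving the stated value. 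The main obstacle is the analytic input invoked at the end of the second step: the meromorphic continuation and, crucially, the non-vanishing of $L(s,\psi^n)$ on $\Re(s)=1$ for every $n\neq 0$, which is precisely what powers Weyl's criterion. Everything else is bookkeeping with the split/inert dichotomy and with the two-to-one folding of the circle onto $[0,\pi]$.
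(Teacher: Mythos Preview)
The paper does not give its own proof of this theorem; it simply cites \cite[Theorem~3.1.1(a)]{AIW}. Your sketch is the classical route to Deuring (or Hecke) equidistribution and is correct: identify the CM newform with a Hecke Gr\"ossencharacter $\psi$ on the imaginary quadratic field, split the primes into inert (density $\tfrac12$, all with $\theta_p=\tfrac{\pi}{2}$) and split, and for the split primes reduce to equidistribution of the arguments of $\psi(\mathfrak{p})$ on the circle via Weyl's criterion, which in turn is driven by the analytic continuation and non-vanishing on $\Re(s)=1$ of $L(s,\psi^n)$ for $n\neq 0$. The only point worth making explicit is that the choice of $\mathfrak{p}$ over a split $p$ determines $\phi_p$ only up to sign, so the cleanest formulation sums over all degree-one prime ideals of $K$ with norm $\le x$ and then passes through the two-to-one fold; you essentially do this, but it would be good to say so plainly. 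Otherwise your outline matches what one finds in the literature and is a complete proof plan.
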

In this case, if $A$ is a finite set not containing $\pi/2$, then by \thmref{thm:deuring} the density of the set $\{p\in \mathbb{P} ~~~|~~~ \theta_p\in A\}$ is $0$.
Next, we state the following theorem established by Tang and Wu \cite{TW}, which will be used to prove 
Theorem \ref{thm:abscissa_absolute}.
\begin{theorem}\label{thm:TW}
We have 
\begin{equation}\label{eq:3}
\begin{split}
\sum_{n\leq x} \vert \lambda_{{\rm sym}^m f}(n) \vert & \sim C_m(f) x (\log 
x)^{-\delta_m},\\
\sum_{n\leq x} \vert \lambda_f(n^m) \vert & \sim D_m(f) x (\log 
x)^{-\delta_m},
\end{split}
\end{equation}
unconditionally for $x \rightarrow \infty$, where $C_m(f), D_m(f)$ are two positive 
constants depending on $f, m$ and
\begin{equation*}
\delta_m = 1-\frac{4 (m+1)}{\pi m(m+2)} \cot \left( \frac{\pi}{2(m+1)}\right).
\end{equation*}
\end{theorem}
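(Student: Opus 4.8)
The plan is to obtain both asymptotics by the Selberg--Delange method applied to the Dirichlet series of the two nonnegative multiplicative arithmetic functions $n\mapsto|\lambda_{{\rm sym}^m f}(n)|$ and $n\mapsto|\lambda_f(n^m)|$. First I would record multiplicativity: $\lambda_f$ is multiplicative, and if $\gcd(a,b)=1$ then $\gcd(a^m,b^m)=1$, so $n\mapsto\lambda_f(n^m)$ is multiplicative; likewise $\lambda_{{\rm sym}^m f}$ is multiplicative because $L(s,{\rm sym}^m f)$ has an Euler product. Taking absolute values preserves both multiplicativity and nonnegativity. By Deligne's bound $|\lambda_f(p^m)|=|\lambda_{{\rm sym}^m f}(p)|\le m+1$, so the Dirichlet series
\[
F(s)=\sum_{n\ge1}|\lambda_{{\rm sym}^m f}(n)|\,n^{-s},\qquad G(s)=\sum_{n\ge1}|\lambda_f(n^m)|\,n^{-s}
\]
converge absolutely and factor into Euler products for $\R(s)>1$.

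The exponent of $\log x$ is dictated by the mean of the prime values. By \lemref{lemma:lambdaformula}, for $p\nmid N$ we have $|\lambda_f(p^m)|=|\lambda_{{\rm sym}^m f}(p)|=|U_m(\cos\theta_p)|$, where $U_m(\cos\theta)=\sin((m+1)\theta)/\sin\theta$ is the Chebyshev polynomial of the second kind. The Sato--Tate equidistribution of \thmref{thm:satotate} then gives
\[
\frac{1}{\pi(x)}\sum_{p\le x}|\lambda_f(p^m)|\;\longrightarrow\;\kappa:=\frac{2}{\pi}\int_0^\pi|\sin((m+1)\theta)|\,\sin\theta\,d\theta.
\]
Splitting this integral at the zeros $\theta=j\pi/(m+1)$, $0\le j\le m+1$, of $\sin((m+1)\theta)$, using the antiderivative of $\sin((m+1)\theta)\sin\theta=\tfrac12(\cos m\theta-\cos(m+2)\theta)$, and summing the resulting elementary expression yields the closed form $\kappa=\frac{4(m+1)}{\pi m(m+2)}\cot(\pi/(2(m+1)))$, so that $\kappa=1-\delta_m$ with $\delta_m$ exactly as in the statement. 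Since the two functions coincide at every prime, both $F$ and $G$ inherit the same singularity exponent, hence the same $\delta_m$; the two constants differ only through the higher--prime--power factors $|\lambda_{{\rm sym}^m f}(p^j)|$ versus $|\lambda_f(p^{jm})|$, $j\ge2$.

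To run the method I would expand $|U_m(\cos\theta)|$ in the $L^2(\mu_{ST})$--orthonormal basis $\{U_j(\cos\theta)\}_{j\ge0}$, writing $|U_m(\cos\theta)|=\sum_{j\ge0}c_j\,U_j(\cos\theta)$ with $c_0=\kappa$ and $c_j=\frac{2}{\pi}\int_0^\pi|U_m(\cos\theta)|\,U_j(\cos\theta)\sin^2\theta\,d\theta$. Because $U_j(\cos\theta_p)=\lambda_{{\rm sym}^j f}(p)$, taking the logarithm of the Euler product and isolating the $p^{-s}$ contribution gives, formally,
\[
\log F(s)=c_0\sum_p p^{-s}+\sum_{j\ge1}c_j\sum_p\lambda_{{\rm sym}^j f}(p)\,p^{-s}+(\text{holomorphic for }\R(s)>\tfrac12).
\]
The first term is $\kappa\log\zeta(s)$, which produces the factor $\zeta(s)^\kappa$ and hence the power $(\log x)^{\kappa-1}=(\log x)^{-\delta_m}$, while each inner sum with $j\ge1$ equals $\log L(s,{\rm sym}^j f)$ up to a function holomorphic for $\R(s)>1/2$. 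Invoking the automorphy of the symmetric powers (so that each $L(s,{\rm sym}^j f)$ is entire and nonvanishing on $\R(s)=1$) lets me write $F(s)=\zeta(s)^\kappa H(s)$ with $H$ holomorphic and nonzero in a half-plane $\R(s)>1-\eta$; the Selberg--Delange Tauberian theorem then delivers $\sum_{n\le x}|\lambda_{{\rm sym}^m f}(n)|\sim\frac{H(1)}{\Gamma(\kappa)}\,x(\log x)^{\kappa-1}$ with $C_m(f)=H(1)/\Gamma(\kappa)>0$. The identical scheme applied to $G(s)$ gives the second asymptotic with its own positive constant $D_m(f)$.

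I expect the main obstacle to be the analytic continuation just to the left of $\R(s)=1$: one must show that the series $\sum_{j\ge1}c_j\log L(s,{\rm sym}^j f)$ converges and defines a bounded holomorphic function in some strip $1-\eta\le\R(s)\le1$. This demands the full known automorphy of all symmetric powers of $f$ together with control, uniform in $j$, of their zero-free regions and of the size of $\log L(s,{\rm sym}^j f)$, weighed against the decay of the coefficients $c_j$. Since $|U_m(\cos\theta)|$ has corners where $\sin((m+1)\theta)$ vanishes, its Fourier coefficients $c_j$ decay only polynomially, and reconciling this slow decay with the analytic estimates so that the series converges in a genuine half-plane past $1$ is the technical heart of the argument, and the reason the conclusion is unconditional only through the deep input of \cite{TW}.
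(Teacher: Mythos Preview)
The paper does not prove this theorem: it is stated as a result of Tang and Wu \cite{TW} and then used as a black box in Section~\ref{abscissa} to establish \thmref{thm:abscissa_absolute}. There is therefore no proof in the paper for your proposal to be compared against.

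For what it is worth, your outline---multiplicativity, computing the Sato--Tate mean $\kappa=1-\delta_m$ of $|U_m(\cos\theta)|$, expanding $|U_m|$ in the Chebyshev basis $\{U_j\}$ so as to factor the generating Dirichlet series as $\zeta(s)^{\kappa}$ times a product built from the $L(s,{\rm sym}^j f)$, and then applying a Selberg--Delange Tauberian theorem---is the standard route and is essentially how \cite{TW} proceeds. The technical point you correctly isolate (controlling the Chebyshev tail together with uniform analytic information on $\log L(s,{\rm sym}^j f)$ just to the left of $\R(s)=1$) is exactly where the work lies. One historical caveat: your appeal to the ``full known automorphy of all symmetric powers'' (Newton--Thorne) is anachronistic relative to \cite{TW}, which predates that result and instead handles this step with the analytic input already available from potential automorphy; your version would be a cleaner but later route to the same conclusion.
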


\section{Proof of Theorem \ref{thm:noncm}}
By Lemma \ref{lemma:lambdaformula}, we have
$$\lambda_f(p^m)= \frac{\sin{((m+1)\theta_p)}}{\sin{\theta_p}}, \ \theta_p \in [0,\pi],$$
with the obvious interpretation in the limiting case $\theta_p=0, \pi$.
Since the set $\{p\in \mathbb{P} ~~~|~~~ \theta_p=0 ~~~{\rm or}~~~\pi\}$ has density $0$,
we may assume that $\theta_p$ is different from $0$ and $\pi$. 
For $\theta_p$ in the interval $(0,\pi)$, we know that $\sin{\theta_p} > 0$. Therefore the sign of 
$\lambda_f(p^m)$ is the same as the sign of $\sin{((m+1) \theta_p)}$.
\subsection{Even case}
First assume that $m$ is even. Then we have 
\begin{equation}\label{eq:1}
\begin{split}
\sin{((m+1) \theta_p)} >0 & \iff
(m+1)\theta_p\in \bigcup_{j=0}^{\frac{m}{2}}(2j\pi , (2j+1)\pi)\\
& \iff
\theta_p\in \bigcup_{j=0}^{\frac{m}{2}}\left(\frac{2j\pi}{m+1}, \frac{(2j+1)\pi}{m+1}\right).
\end{split}
\end{equation}
Similarly,
\begin{equation}\label{eq:2}
\sin{((m+1) \theta_p)} <0 \iff
\theta_p\in \bigcup_{j=1}^{\frac{m}{2}}\left(\frac{(2j-1)\pi}{m+1}, \frac{2j\pi}{m+1}\right).
\end{equation}
From Theorem \ref{thm:satotate} we know that the sequence $\{\theta_p\}_{p\in \mathbb{P}}$ with
$p\not|N$ is 
equi-distributed in $[0,\pi]$ with respect to the Sato-Tate measure $\mu_{ST}$.
Therefore there exist infinitely many primes $p$ such that 
$\theta_p\in \bigcup_{j=0}^{\frac{m}{2}}\left(\frac{2j\pi}{m+1}, \frac{(2j+1)\pi}{m+1}\right)$, and there exist infinitely many primes $p$ such that 
$\theta_p\in \bigcup_{j=1}^{\frac{m}{2}}\left(\frac{(2j-1)\pi}{m+1}, \frac{2j\pi}{m+1}\right)$.
Thus the sequence $\{\lambda_f(p^m)\}_{p\in \mathbb{P}}$ changes signs infinitely often. 
Next we calculate the densities of the sets $P_m$ and $P^{'}_m$.
Let
$$
A=\bigcup_{j=0}^{\frac{m}{2}}\left(\frac{2j\pi}{m+1}, \frac{(2j+1)\pi}{m+1}\right)
 \subseteq [0,\pi].
$$
From the definitions of the density $d$ and Sato-Tate measure $\mu_{ST}$, we have
$$
d(P_m)= \mu_{ST}(A)= \frac{2}{\pi}\int_{A}\sin^2t \ dt= 
\frac{2}{\pi}\sum_{j=0}^{\frac{m}{2}}\int_{\left(\frac{2j\pi}{m+1},\frac{(2j+1)\pi}{
m+1}\right)}\sin^2t \ dt.
$$
Using the fact $\int\sin^2t \ dt= \frac{t}{2}-\frac{\sin 2t}{4}$ in the above expression, we obtain
$$
d(P_m)=\frac{2}{\pi}\sum_{j=0}^{\frac{m}{2}}
\left(\frac{\pi}{2(m+1)}-\frac{\sin{\left(\frac{(2j+1)2\pi}{m+1}\right)}}{4}
+\frac{\sin{\left(\frac{4j\pi}{m+1}\right)}}{4}\right).
$$
Denoting $\frac{2\pi}{m+1•}$ by $x$ in the above expression, we get
\begin{equation}\label{j1}
d(P_m)=\frac{m+2}{2(m+1)}-\frac{1}{2\pi}\{\sin{x}-\sin{2x}+\sin{3x}-\cdots+
\sin{((m+1)x)}\}.
\end{equation}
Since $\sin{((m+1)x)}=0$, the trigonometric term $\sin{x}-\sin{2x}+\sin{3x}-\cdots+\sin{((m+1)x)}$ 
in \eqref{j1} can be rewritten as
$$
\{\sin{x}+\sin{2x}+\sin{3x}+\cdots+\sin{mx}\}-2\{\sin{2x}+\sin{4x}+\cdots+\sin{mx}\}.
$$
Now using the trigonometric  identity
$$
\sin{x}+\sin{2x}+\cdots+\sin{nx}\;=\; \frac{\sin{\frac{(n+1)x}{2}}\sin{\frac{nx}{2}}}
{\sin{\frac{x}{2}}}
$$
in \eqref{j1},
we obtain
\[
d(P_m)=\frac{(m+2)}{2(m+1)}-\frac{1}{2\pi•}\left(\frac{\sin 
\frac{(m+1)x}{2•}\sin\frac{mx}{2•}}{\sin \frac{x}{2}•}-2\frac{\sin 
((\frac{m}{2•}+1)x)\sin\frac{mx}{2•}}{\sin x}\right)\; .
\]
Since $\sin \left(\frac{m+1•}{2•}\right)x=0$, we deduce that
\[
d(P_m)=\frac{m+2}{2(m+1)•}+\frac{1}{\pi}\frac{\sin 
((\frac{m}{2•}+1)x)\sin\frac{mx}{2•}}{\sin x}\;.
\]
Putting $x= \frac{2\pi}{m+1}$ in the above equation and simplifying it, we get 
\[
d(P_m)\;=\; \frac{m+2}{2(m+1)•}-\frac{1}{2\pi•}\tan\left(\frac{\pi}{m+1•}\right)\; 
.
\]

Since the density of the set 
$$
\{p \in \mathbb{P} \mid \lambda_f(p^m)=0\}
=\{p \in \mathbb{P} \mid \theta_p =\frac{i\pi}{m+1}, 1\le i\le m\}
$$
is $0$ 
in the set of primes, the density $d(P^{'}_m)$ of $P^{'}_m$ is $1-d(P_m)$. Thus
\[
d(P^{'}_m)\;=\; 
\frac{m}{2(m+1)•}+\frac{1}{2\pi}\tan\left(\frac{\pi}{m+1•}\right)\; .
\]

\subsection{Odd case}
We assume that $m$ is an odd positive integer. Using the arguments similar to the even case, we have
$$
\lambda_f(p^m)>0 ~~~~\mbox{if~~~and~~~~only~~~~if}~~~~
\theta_p\in \bigcup_{j=1}^{\frac{m+1}{2}}\left(\frac{(2j-2)\pi}{m+1}, \frac{(2j-1)\pi}{m+1}\right),
$$
and 
$$\lambda_f(p^m)<0 ~~~~\mbox{if~~~and~~~~only~~~~if}~~~~
\theta_p\in \bigcup_{j=1}^{\frac{m+1}{2}}\left(\frac{(2j-1)\pi}{m+1}, \frac{2j\pi}{m+1}\right).
$$
Let 
$$
A= \bigcup_{j=1}^{\frac{m+1}{2}}\left(\frac{(2j-2)\pi}{m+1}, \frac{(2j-1)\pi}{m+1}\right)
$$
and
$$
B=\bigcup_{j=0}^{\frac{m-1}{2}}\left(\frac{(2j+1)\pi}{m+1}, \frac{(2j+2)\pi}{m+1}\right)
=\bigcup_{j=1}^{\frac{m+1}{2}}\left(\frac{(m-2j+2)\pi}{m+1}, \frac{(m-2j+3)\pi}{m+1}\right).
$$
To prove that the sets $P_m$ and $P_m^{'}$ have same density $\frac{1}{2}$, 
we prove that the Sato-Tate measures of the two sets $A$ and $B$ are equal. Since
each of the sets $A$ and $B$ are union of $\frac{m+1}{2}$ disjoint intervals given in 
the above form, to prove that $\mu_{ST}(A)=\mu_{ST}(B)$, it is sufficient to prove that 
$\mu_{ST}(I_j)=\mu_{ST}(I_j^{'})$ for each $j$ with $1\le j\le \frac{m+1}{2}$, where
$$
I_j=\left(\frac{(2j-2)\pi}{m+1}, \frac{(2j-1)\pi}{m+1}\right)~~~~~~~~~~~~~
\mbox{and}~~~~~~~~~~~~~I_j^{'}=\left(\frac{(m-2j+2)\pi}{m+1}, \frac{(m-2j+3)\pi}{m+1}\right).
$$

Using the integral evaluation $\int \sin^2 t \ dt= \frac{t}{2}-\frac{\sin 2t}{4}$, we get 
\begin{equation}\label{j2}
\int_{I_j}\sin^2x \ dx=\frac{\pi}{2(m+1)}+\frac{1}{4}
\left(\sin\left(\frac{2(2j-2)\pi}{m+1}\right)-\sin\left(\frac{2(2j-1)\pi}{m+1}\right)\right).
\end{equation}
On the other hand we have
\begin{equation}\label{j3}
\int_{I_j^{'}}\sin^2x \ dx=\frac{\pi}{2(m+1)}+\frac{1}{4}
\left(\sin\left(\frac{2(m-2j+2)\pi}{m+1}\right)-\sin\left(\frac{2(m-2j+3)\pi}{m+1}\right)\right).
\end{equation}
We know that
$$
\sin{\left(\frac{2(m-2j+2)\pi}{m+1}\right)}=\sin{\left(2\pi-\frac{2(2j-1)\pi}{m+1}\right)}
=-\sin{\left(\frac{2(2j-1)\pi}{m+1}\right)}.
$$
Similarly, 
$$\sin\left(\frac{2(m-2j+3)\pi}{m+1}\right)=-\sin\left(\frac{2(2j-2)\pi}{m+1}\right).$$
Thus the expressions in the right hand side of \eqref{j2} and \eqref{j3} are same.
Therefore from the above discussion, we deduce that
$$
\int_{I_j}\sin^2x \ dx=\int_{I_j^{'}}\sin^2x \ dx.
$$
This proves the theorem.
\section{Proof of Theorem \ref{thm:cm}}
Let $f$ be a normalized newform with complex multiplication.
\subsection{Odd case}
Let $m\ge1$ be an odd positive integer. If $\theta_p=\frac{\pi}{2}$, then
$$
\lambda_f(p^m)=\frac{\sin{\frac{(m+1)\pi}{2}}}{\sin{\frac{\pi}{2}}}=0.
$$
Therefore from Theorem \ref{thm:deuring}, we have
$$d(\{p \ {\rm prime} \mid \lambda_f(p^m)=0\})= d(\{p \ {\rm prime} \mid \theta_p=\pi/2\}) = \frac{1}{2}.$$ 
Following the arguments similar to the odd case in the previous section, we have 
$$
\lambda_f(p^m) > 0 \iff \theta_p \in 
\bigcup_{j=1}^{\frac{m+1}{2}}\left(\frac{(2j-2)\pi}{m+1}, \frac{(2j-1)\pi}{m+1}\right)
\bigg \backslash \{\pi /2\},
$$
and
$$
 \lambda_f(p^m)<0 \iff 
 \theta_p \in \bigcup_{j=1}^{\frac{m+1}{2}}\left(\frac{(2j-1)\pi}{m+1}, \frac{2j\pi}{m+1}\right)
\bigg \backslash \{\pi /2\}.
$$
Using Theorem \ref{thm:deuring}, we have
$$
d(P_m)=\frac{1}{2\pi}\sum_{j=1}^{\frac{m+1}{2}}\frac{\pi}{m+1}=\frac{1}{4}.
$$
Similarly, $d(P_m^{'})=\frac{1}{4}$.
\subsection{Even case}
Let $m\ge 1$ be an even integer. Then for primes $p$ such that $\theta_p=\frac{\pi}{2}$, we have 
\begin{equation*}
\lambda_f(p^m)= \sin{\frac{(m+1)\pi}{2}}=
    \begin{cases*}
      1 & if $m\equiv 0\pmod 4$, \\
      -1       & if $m\equiv 2\pmod 4$.
    \end{cases*}
  \end{equation*}
  From Theorem \ref{thm:deuring}, we have 
  \begin{equation}\label{eq:03}
  d(\{p \ {\rm prime} \mid \theta_p=\pi/2\}) = \frac{1}{2}.
  \end{equation}
  Suppose that $m\equiv 0 \pmod{4}$. 
  Following the arguments similar to even case in the previous section, we have 
$$
\lambda_f(p^m)>0  \iff 
 \theta_p \in \bigcup_{j=0}^{\frac{m}{2}}\left(\frac{2j\pi}{m+1}, \frac{(2j+1)\pi}{m+1}\right) 
 \bigcup \{\pi/2\},
 $$
and
$$
\lambda_f(p^m)<0 \iff 
 \theta_p \in \bigcup_{j=1}^{\frac{m}{2}}\left(\frac{(2j-1)\pi}{m+1}, \frac{2j\pi}{m+1}\right) 
 \bigg\backslash \{\pi/2\}.
$$
Now using Theorem \ref{thm:deuring} and \eqref{eq:03}, we have
$$
d(P_m)=\frac{m+2}{4(m+1)}+\frac{1}{2}
$$
and 
$$
 d(P^{'}_m)=\frac{m}{4(m+1)}.
$$
If $m\equiv 2 \pmod{4}$, then similarly we can prove that 
$$
d(P_m)=\frac{m+2}{4(m+1)}
$$
and 
$$
 d(P^{'}_m)=\frac{m}{4(m+1)} +\frac{1}{2}.
$$
This proves the theorem.

\section{Proof of Theorem \ref{thm:abscissa_absolute}}\label{abscissa}
First we prove the following result. If 
$$L(s) := \sum_{n=1}^{\infty} \frac{a(n)} {n^s}$$
is a Dirichlet series such that $\sum_{n \geq 1} | a(n) |$ diverges then 
the abscissa of absolute convergence 
$\sigma_{a}$ of 
$L(s)$ is given by
\begin{equation}\label{eq:4}
\sigma_a= \inf \left\{\alpha \in \mathbb{R} \mid \sum_{n \leq N} \vert a(n) 
\vert = O_\alpha(N^\alpha) \right\}.
\end{equation} 
%We give a proof of the above mentioned result here. 
Let
$$
\gamma= \inf \left\{\alpha \in \mathbb{R} \mid \sum_{n \leq N} \vert a(n) 
\vert = O_\alpha(N^\alpha) \right\},
$$
and 
$$
A(N)=\sum_{n=1}^{N}|a(n)|. 
$$
By partial summation we have
\begin{equation}\label{well}
\sum_{n=1}^{N}\frac{|a(n)|}{n^\beta}=A(N)\frac{1}{N^\beta}+\beta\int_{1}^N A(u)\frac{1}{u^{\beta+1}} du
\end{equation}
for any $\beta\in \mathbb{R}$.
If $\beta>\gamma$, then
$$
A(N)\frac{1}{N^\beta} \to 0 ~~~~{\rm as}~~~~~ N\to \infty
$$
and
$$
\int_{1}^\infty A(u)\frac{1}{u^{\beta+1}} du< \infty.
$$
Thus by taking $N\to \infty$ in both the sides of \eqref{well}, we deduce that 
if $\beta>\gamma$, the series
$$
\sum_{n=1}^{\infty}\frac{|a(n)|}{n^\beta}
$$
is convergent. If $\beta<\gamma$, then $N^\beta=o(A(N))$. Therefore
$$
A(N)\frac{1}{N^\beta} \to \infty ~~~~{\rm as}~~~~~ N\to \infty
$$
and
$$
\int_{1}^\infty A(u)\frac{1}{u^{\beta+1}} du
$$
is infinite. Thus by taking $N\to \infty$ in both the sides of \eqref{well}, we deduce that 
if $\beta<\gamma$, the series
$$
\sum_{n=1}^{\infty}\frac{|a(n)|}{n^\beta}
$$
is divergent. This proves that $\sigma_a=\gamma$.

Now we establish the abscissa of absolute convergence of the series $L_m(s, f)$. The establishment of the abscissa of absolute convergence for the series $L(s, sym^m f)$ can be treated exactly along the same lines as $L_m(s, f)$. First, observe that the series $\sum_{n \geq 1} \vert \lambda_f(n^m) \vert$ is divergent by the asymptotic formula given in \eqref{eq:3}.  Therefore the abscissa of absolute convergence $\sigma_{a,m}$ of $L_m(s,f)$ is equal to
$$
 \inf \left\{\alpha \in \mathbb{R} \mid \sum_{n \leq N} \vert \lambda_f(n^m) \vert 
= O_\alpha(N^\alpha) \right\}.
$$
For any $\epsilon>0$, we have  $ \lambda_f(n) = O_\epsilon(n^\epsilon)$ by Deligne bound. 
Using this bound, we have $\sigma_{a,m} \leq 1$. 
If $\sigma_{a,m} < 1$ then there exists  $\delta>0$ such that
\begin{equation}\label{eq:5}
\sum_{n \leq N} \vert \lambda_f(n^m) \vert = O_\delta (N^{1-\delta}).
\end{equation}
On the other hand, using the asymptotic formula \eqref{eq:3} we have
$$
\frac{N}{(\log N)^{\delta_m}} \ll \sum_{n \leq N} \vert \lambda_f(n^m) \vert \ 
\ \ \ {\rm as} \ N \rightarrow \infty.
$$
This is a contradiction to \eqref{eq:5}. Therefore
$\sigma_{a,m} =1$. 
In the case of $L(s,sym^m f)$, we need to use the bound 
$\lambda_{{\rm sym}^m f}(n) = O_\epsilon(n^\epsilon)$, which holds for each  $\epsilon >0$, and the 
asymptotic formula for the sum
$\sum_{n \leq x} \vert \lambda_{{\rm sym}^m f}(n) \vert$ presented in Theorem 
\ref{thm:TW} to get the required result. The bound $\lambda_{{\rm sym}^m f} (n) = O_\epsilon(n^{\epsilon})$ follows from the observation that 
$\vert \lambda_{{\rm sym}^m f} (n) \vert \leq d_{m+1}(n)$ \cite[Equation (1.8)]{TW},
where $d_{m+1}(n)$ denotes the number of ways one can write $n$ as a product of $m + 1$ natural numbers.\\

\noindent{\bf Acknowledgements.}  
We would like to thank Prof. M. Ram Murty for his valuable suggestions which were helpful in establishing the density results. We also thank Prof. J.-P. Serre for his comments on the paper and correcting one of the definitions. The research of the second author was partially supported by a DST-SERB grant ECR/2016/001359.

\end{document}